\documentclass[12pt]{amsart} 
\usepackage{amsmath,amsfonts,amssymb,euler}

\makeatletter
\@namedef{subjclassname@2010}{%
  \textup{2010} Mathematics Subject Classification}
\makeatother



\begin{document}

\theoremstyle{plain} 
\theoremstyle{definition} 
\theoremstyle{remark}

\newtheorem{prop}{Proposition} 
\newtheorem{thm}{Theorem}[section] 
\newtheorem{lem}[thm]{Lemma} 
\newtheorem{state}{State}[section]

\theoremstyle{definition} 
\newtheorem{definition}{Definition}[section]

\theoremstyle{remark} 
\newtheorem{remark}{Remark}[section] 
\newtheorem{example}{Example}

\title{Generalized LCM matrices}
\author{Antal Bege}
\address{Department of Mathematics and Informatics,
University of Sapientia,
P.O. 9, P. O. Box 4,
RO-540485 T\^\i rgu Mure\c{s},
Romania}
\email{abege@ms.sapientia.ro}
\subjclass[2010]{Primary 11C20, 11A25, 15A36 Secondary  15A36}%
\keywords{LCM matrix, Smith determinant, arithmetical function}%
\date{
August 24, 2011}


\begin{abstract} 
Let $f$ be an arithmetical function. The matrix $[f[i,j]]_{n\times n}$ given by the value of $f$ in least  common multiple of $[i,j]$, $f\big([i,j]\big)$ as its $i,\; j$ entry is called the least common multiple (LCM) matrix. We consider the generalization of this matrix where the elements are in the form $f\big(n,[i,j]\big)$ and $f\big(n,i,j,[i,j]\big)$.
\end{abstract}

\maketitle

\section{Introduction}
The classical Smith determinant was introduced in 1875 by H. J. S. Smith \cite{smith1} who also proved that
\begin{equation}
\label{1}
\det [(i,j)]_{n\times n}=
\left|
\begin{array}{cccc}
(1,1)&(1,2)&\cdots&(1,n)\\
(2,1)&(2,2)&\cdots&(2,n)\\
\cdots&\cdots&\cdots&\cdots\\
(n,1)&(n,2)&\cdots&(n,n)\\
\end{array}
\right|
=\varphi (1) \varphi(2)\cdots \varphi(n),
\end{equation}
where $(i,j)$ represents the greatest common divisor of $i$ and $j$, and $\varphi(n)$ denotes the  Euler totient function.
\\
The GCD matrix with respect to $f$ is
\[
[f(i,j)]_{n\times n}=
\left[
\begin{array}{cccc}
f((1,1))&f((1,2))&\cdots&f((1,n))\\
f((2,1))&f((2,2))&\cdots&f((2,n))\\
\cdots&\cdots&\cdots&\cdots\\
f((n,1))&f((n,2))&\cdots&f((n,n))\\
\end{array}
\right].
\]
There are quite a few generalized forms of GCD matrices, which can be found in several references 
\cite{bege1, bourque1,  hauk1, hong1, ovall1}.

H. J. S. Smith \cite{smith1} also evaluated the determinant of
\[
\big[[i,j]\big]_{n\times n}=
\left[
\begin{array}{cccc}
\mbox{\upshape[}1,1]&[1,2]&\cdots &[1,n] \\
\mbox{\upshape[}2,1]&[2,2]&\cdots &[2,n] \\
\cdots &\cdots &\cdots &\cdots \\
\mbox{\upshape[}n,1]&[n,2]&\cdots&[n,n] \\
\end{array}
\right],
\]
and proved that
\begin{eqnarray*}
\det \big[[i,j]\big]_{n\times n}&=&(n!)^2g(1)g(2)\cdots g(n)=\\
&=&\prod_{k=1}^N\varphi(k)\prod_{p\mid k}(-p).
\end{eqnarray*}
where 
$
g(n)=\frac{1}{n}\sum_{d\mid n}d\mu (d),
$
$\mu(n)$ being the classical M\"obius function.
\\
The structure of an LCM matrix $\big[[i,j]\big]_{n\times n}$ is the following (I. Korkee, P. Haukkanen \cite{korkee1})
\[
\big[[i,j]\big]_{n\times n}=
\mbox{ diag}\big(1,\; 2,\ldots ,n\big) A A^{T} 
\mbox{ diag}\big(1\; 2,\ldots ,n\big),
\]
where
$A=[a_{ij}]_{n\times n}$, 
\[
a_{ij}=
\left\{
\begin{array}{ccc}
\sqrt{g(j)},&\mbox{ if }&j\mid i\\
0,&\mbox{ if }&j \not| \;  i
\end{array}
\right. .
\]

The LCM matrix with respect to $f$ is
\[
[f[i,j]]_{n\times n}=
\left[
\begin{array}{cccc}
f([1,1])&f([1,2])&\cdots&f([1,n])\\
f([2,1])&f([2,2])&\cdots&f([2,n])\\
\cdots&\cdots&\cdots&\cdots\\
f([n,1])&f([n,2])&\cdots&f([n,n])\\
\end{array}
\right].
\]
Results concerning LCM matrices appear in papers S. Beslin \cite{beslin1},
K. Bourque, S. Ligh \cite{bourque2},  W. Feng, S. Hong, J. Zhao  \cite{feng1}
P. Haukkanen, J. Wang and J. Sillanp\"a\"a
\cite{hauk1}. 

In this paper we study matrices which have as variables the least common multiple and the indices
\[
\big[f(n,[i,j])\big]_{n\times n}=
\left[
\begin{array}{cccc}
f(n,[1,1])&f(n,[1,2])&\cdots&f(n,[1,n])\\
f(n,[2,1])&f(n,[2,2])&\cdots&f(n,[2,n])\\
\cdots&\cdots&\cdots&\cdots\\
f(n,[n,1])&f(n,[n,2])&\cdots&f(n,[n,n])\\
\end{array}
\right]
\]
and the more general form matrices
\[
\big[f(n,i,j,[i,j])\big]_{n\times n}\!\!\!=\!\!\!
\left[\!\!\!
\begin{array}{cccc}
f(n,1,1,[1,1])&f(n,1,2,[1,2])&\!\cdots \!&f(n,1,n,[1,n])\\
f(n,2,1,[2,1])&f(n,2,2,[2,2])&\cdots&f(n,2,n,[2,n])\\
\cdots&\cdots&\cdots&\cdots\\
f(n,n,1,[n,1])&f(n,n,2,[n,2])&\cdots&f(n,n,n,[n,n])\\
\end{array}\!\!\!
\right]
\]
\section{Generalized LCM matrices}

\begin{thm}
\label{3_1_2}

For a given totally multiplicative  aritmetical function $g(n)$ let
\[
f(n,[i,j])=g([i,j])\sum_{k\leq \frac{n}{[i,j]}}g(k).
\]
Then
\begin{equation}
\label{3_1_1}
\big[f(n,[i,j])\big]_{n\times n}=
C_n^{T}\mbox{ diag}\big(g(1),\; g(2),\ldots ,g(n)\big)C_n,
\end{equation}
where
$C_n=[c_{ij}]_{n\times n}$ 
\[
c_{ij}=
\left\{
\begin{array}{ccc}
1,&\mbox{ if }&j\mid i\\
0,&\mbox{ if }&j \not| \;  i
\end{array}
\right. .
\]
For a determinant we have
\begin{eqnarray}
\label{111}
\det \big[f(n,[i,j])\big]_{n\times n}=g(1)g(2)\cdots g(n).
\end{eqnarray}
\end{thm}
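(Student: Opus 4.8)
The plan is to establish the matrix factorization \eqref{3_1_1} by a direct computation of the $(i,j)$ entry of the right-hand side, and then read off \eqref{111} as an immediate corollary by taking determinants.

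First I would compute the $(i,j)$ entry of $C_n^{T}\operatorname{diag}(g(1),\dots,g(n))C_n$. Expanding the triple product, this entry equals $\sum_{k=1}^{n} c_{ki}\,c_{kj}\,g(k)$. Now $c_{ki}c_{kj}=1$ precisely when $i\mid k$ and $j\mid k$ simultaneously, that is, when $[i,j]\mid k$, and it is $0$ otherwise; so the entry reduces to $\sum_{[i,j]\mid k,\ k\le n} g(k)$. Writing $k=[i,j]\,m$ with $m$ ranging over $1\le m\le n/[i,j]$, and invoking the total multiplicativity of $g$ to get $g([i,j]\,m)=g([i,j])\,g(m)$ (with no coprimality requirement), the entry becomes $g([i,j])\sum_{m\le n/[i,j]} g(m)=f(n,[i,j])$. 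This is exactly the claimed identity \eqref{3_1_1}.

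For the determinant I would observe that $c_{ij}\ne 0$ forces $j\mid i$, hence $j\le i$, so $C_n$ is lower triangular; moreover $c_{ii}=1$ for every $i$, so $\det C_n=1$. Taking determinants in \eqref{3_1_1} then yields
\[
\det\big[f(n,[i,j])\big]_{n\times n}=(\det C_n)^{2}\prod_{k=1}^{n} g(k)=g(1)g(2)\cdots g(n),
\]
which is \eqref{111}.

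I do not anticipate a real obstacle: the argument is essentially a single entrywise calculation. The only place that requires attention is the factorization $g([i,j]\,m)=g([i,j])\,g(m)$, which genuinely uses that $g$ is totally multiplicative rather than merely multiplicative, since $[i,j]$ and $m$ need not be coprime; this is precisely where the hypothesis on $g$ is consumed, and the statement would fail for a generic multiplicative function.
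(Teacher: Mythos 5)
Your proposal is correct and follows essentially the same route as the paper: expand the $(i,j)$ entry of the triple product, recognize the condition $i\mid k$ and $j\mid k$ as $[i,j]\mid k$, substitute $k=[i,j]m$, and use total multiplicativity. Your added observation that $C_n$ is lower triangular with unit diagonal (so $\det C_n=1$) makes the determinant step slightly more explicit than the paper's, but the argument is the same.
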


\begin{proof}
After multiplication, the general element of
$A=(a_{ij})_{n\times n}$,
\[
A=C_n^{T}\mbox{ diag}\big(g(1),\; g(2),\ldots ,g(n)\big)C_n
\]
is
\begin{eqnarray*}
a_{ij}=\sum_{k=1}^nc_{ki}g(k)c_{kj}=\sum_{\footnotesize
\begin{array}{c}
 i\mid k\\ 
 j\mid k\\
 k\leq n
\end{array}}
g(k)=\sum_{\footnotesize
\begin{array}{c}
[i,j]\mid k\\
k\leq n
\end{array}}g(k)=
\sum_{l\leq \frac{n}{[i,j]}}g\big([i,j]l\big).
\end{eqnarray*}
Because  $g(n)$ is totally multiplicative
\[
a_{ij}=g\big([i,j]\big)\sum_{\ell\leq \frac{n}{[i,j]}}g(\ell)=f\big([i,j]\big).
\]
If we calculate the determinant of both parts of (\ref{3_1_1}) we have (\ref{111}).
\end{proof}

\noindent
{\bf Particular cases}

\begin{example}
If $g(n)=1,$ then
\[
f\big(n,[i,j]\big)=\left\lfloor \frac{n}{[i,j]}\right\rfloor ,
\]
where $\lfloor x\rfloor$ denotes the integer part of $x$.
\\
From Theorem \ref{3_1_2} we have
\[
\left[\left\lfloor \frac{n}{[i,j]}\right\rfloor\right]_{n\times n}=
C_n^{T}\mbox{ diag}\big(1,\; 1,\ldots ,1\big)C_n,
\]
\[
\det \left[\left\lfloor \frac{n}{[i,j]}\right\rfloor\right]_{n\times n}=1.
\]
\end{example}

\begin{example}
If $g(n)=n,$ then
\[
f\big(n,[i,j]\big)=
\frac{\left\lfloor \frac{n}{[i,j]}\right\rfloor\left\lfloor \frac{n}{[i,j]}+1\right\rfloor}{2}.
\]
The decomposition of generalized LCM matrix is
\[
\big[f(n,[i,j])\big]_{n\times n}=
C_n^{T}\mbox{ diag}\big(1,\; 2,\ldots ,n\big)C_n,
\]
and the determinant
\[
\det \big[f(n,[i,j])\big]_{n\times n}=n!.
\]
\end{example}

\begin{example}
If $g(n)=(-1)^{\Omega (n)}$ is a Liouville function, then 
\[
f\big(n,[i,j]\big)=
(-1)^{\Omega\big([i,j]\big)}\sum_{k\leq \frac{n}{[i,j]}}(-1)^{\Omega(k)}
\]
and
\[
\big[f(n,[i,j])\big]_{n\times n}=
C_n^{T}\mbox{ diag}\big(1,\; -1,\ldots ,(-1)^{\Omega(n)}\big)C_n,
\]
\[
\det \big[f(n,[i,j])\big]_{n\times n}=(-1)^{\sum_{k=1}^n\Omega(k)}.
\]
\end{example}
We remark that matrices  related to the greatest integer function appeared in
\cite{jacobsthal1, carlitz1}.

\begin{thm}
\label{11}
For a given totally multiplicative function $g$ let
\[
f(n,i,j,[i,j])=\sum_{k\leq n}g(k)-g(i)\sum_{l\leq \frac{n}{i}}g(l)-g(j)\sum_{l\leq \frac{n}{j}}g(l)+g([i,j])\sum_{k\leq \frac{n}{[i,j]}}g(k).
\]
Then
\[
\big[f(n,i,j,[i,j])\big]_{n\times n}=D_n^T\mbox{ diag}[g(1),g(2),\ldots ,g(n)]D_n,
\]
where
$D_n=[d_{ij}]_{n\times n}$, 
\[
d_{ij}=
\left\{
\begin{array}{ccc}
1,&\mbox{ if }&j\not|\; i\\
0,&\mbox{ if }&j \mid  i
\end{array}
\right. .
\]
\end{thm}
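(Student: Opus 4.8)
The plan is to follow the proof of Theorem \ref{3_1_2} almost verbatim, the one new ingredient being that the ``complementary'' incidence matrix $D_n$ is related to the divisor-indicator matrix $C_n$ of Theorem \ref{3_1_2} by $d_{ij}=1-c_{ij}$ entrywise. First I would compute the generic entry of the right-hand side: setting $B=D_n^T\,\mbox{diag}\big(g(1),g(2),\ldots ,g(n)\big)\,D_n$, one has
\[
b_{ij}=\sum_{k=1}^n d_{ki}\,g(k)\,d_{kj}=\sum_{k=1}^n(1-c_{ki})(1-c_{kj})\,g(k).
\]
Expanding the product $(1-c_{ki})(1-c_{kj})=1-c_{ki}-c_{kj}+c_{ki}c_{kj}$ splits this into four pieces: an unrestricted sum $\sum_{k\leq n}g(k)$, a sum over multiples of $i$, a sum over multiples of $j$, and a sum over the common multiples of $i$ and $j$ (since $c_{ki}c_{kj}=1$ precisely when $i\mid k$ and $j\mid k$, that is, when $[i,j]\mid k$).

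Next I would evaluate each of the three divisor-restricted sums exactly as in the proof of Theorem \ref{3_1_2}. Using total multiplicativity of $g$,
\[
\sum_{\substack{i\mid k\\ k\leq n}}g(k)=\sum_{l\leq n/i}g(il)=g(i)\sum_{l\leq n/i}g(l),
\]
and likewise $\sum_{j\mid k,\ k\leq n}g(k)=g(j)\sum_{l\leq n/j}g(l)$ and $\sum_{[i,j]\mid k,\ k\leq n}g(k)=g([i,j])\sum_{l\leq n/[i,j]}g(l)$. Substituting these back yields
\[
b_{ij}=\sum_{k\leq n}g(k)-g(i)\sum_{l\leq n/i}g(l)-g(j)\sum_{l\leq n/j}g(l)+g([i,j])\sum_{k\leq n/[i,j]}g(k)=f(n,i,j,[i,j]),
\]
which is the asserted factorization.

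I do not expect a genuine obstacle: the argument is just an inclusion--exclusion placed on top of the computation already performed for Theorem \ref{3_1_2}. The only points that deserve a sentence of care are bookkeeping ones: when $i>n$ (or $[i,j]>n$) the corresponding inner sum is empty and the term vanishes, consistently with the formula; and one should note that every diagonal entry $d_{ii}$ equals $0$ (because $i\mid i$), so $D_n$ need not be invertible and, unlike in Theorem \ref{3_1_2}, there is no clean product formula for the determinant --- taking determinants of both sides only gives $\det\big[f(n,i,j,[i,j])\big]_{n\times n}=g(1)g(2)\cdots g(n)\,(\det D_n)^2$.
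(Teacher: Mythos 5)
Your proposal is correct and follows essentially the same route as the paper: the paper also computes the generic entry of $D_n^T\,\mathrm{diag}[g(1),\ldots,g(n)]\,D_n$ as the sum of $g(k)$ over $k\leq n$ with $i\nmid k$ and $j\nmid k$, applies the same inclusion--exclusion to split it into four sums, and uses total multiplicativity to finish. Your closing observation about the determinant matches the paper's remark that the first row (equivalently, the first column of $D_n$) vanishes, so the determinant is always $0$.
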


\begin{proof}
After multiplication the general element of the matrix
\[
A=[a_{ij}]_{n\times n}=D_n^T\mbox{ diag}[g(1),g(2),\ldots ,g(n)]D_n
\]
is
\begin{eqnarray*}
a_{ij}&=&\sum_{
\begin{array}{c}
i\not| k\\
j\not|\; k\\
k\leq n
\end{array}}
g(k)=\sum_{k\leq n}g(k)-\sum_{i\mid k}g(k)-\sum_{j\mid k}g(k)+\sum_{
\begin{array}{c}
i\mid k\\
j\mid k\\
k\leq n
\end{array}}
g(k)=\\
&=&\sum_{k\leq n}g(k)-\sum_{il\leq n}g(il)-\sum_{jl\leq n}g(jl)+
\sum_{\footnotesize
\begin{array}{c}
[i,j]\mid k\\
k\leq n
\end{array}}g(k)
\end{eqnarray*}
The total multiplicativity of $g$ implies,
\begin{eqnarray*}
a_{ij}&=&
\sum_{k\leq n}g(k)-g(i)\sum_{l\leq \frac{n}{i}}g(l)-g(j)\sum_{l\leq \frac{n}{j}}g(l)+g([i,j])\sum_{k\leq \frac{n}{[i,j]}}g(k)=\\
&=&
f(n,i,j,[i,j]).
\end{eqnarray*}
\end{proof}

\noindent
{\bf Particular cases}

\begin{example}
If $g(n)=1,$ then
\[
f\big(n,i,j,[i,j]\big)=\tau(n)-\tau\left(\left\lfloor \frac{n}{i}\right\rfloor\right)-\tau\left(\left\lfloor\frac{n}{j}\right\rfloor\right)+
\left\lfloor \frac{n}{[i,j]}\right\rfloor ,
\]
where $\tau(n)=\displaystyle \sum_{d\mid n}1$.
By Theorem \ref{11}
\[
\left[f\left(n,i,j,\left\lfloor \frac{n}{[i,j]}\right\rfloor\right)\right]_{n\times n}=
D_n^{T}\mbox{ diag}\big(1,\; 1,\ldots ,1\big)D_n.
\]
\end{example}

\begin{example}
If $g(n)=n,$ then
\[
f\big(n,i,j,[i,j]\big)=\sigma(n)-\sigma\left(\left\lfloor \frac{n}{i}\right\rfloor\right)-\sigma\left(\left\lfloor\frac{n}{j}\right\rfloor\right)
+
\frac{\left\lfloor \frac{n}{[i,j]}\right\rfloor\left\lfloor \frac{n}{[i,j]}+1\right\rfloor}{2},
\]
where $\sigma(n)=\displaystyle \sum_{d\mid n}d$.
\\
The general form of a generalized LCM matrix is
\[
\big[f(n,i,j,[i,j])\big]_{n\times n}=
D_n^{T}\mbox{ diag}\big(1,\; 2,\ldots ,n\big)D_n.
\]
\end{example}

\begin{example}
If $g(n)=(-1)^{\Omega (n)}$ is the Liouville function then 
\begin{eqnarray*}
f\big(n,i,j,[i,j]\big)&=&\sum_{k\leq n}(-1)^{\Omega (k)}-(-1)^{\Omega (i)}\sum_{l\leq \frac{n}{i}}(-1)^{\Omega (l)}-
(-1)^{\Omega (j)}\sum_{l\leq \frac{n}{j}}(-1)^{\Omega (l)}g+\\
&+&
(-1)^{\Omega\big([i,j]\big)}\sum_{k\leq \frac{n}{[i,j]}}(-1)^{\Omega(k)}
\end{eqnarray*}
and
\[
\big[f(n,i,j,[i,j]\big]_{n\times n}=
D_n^{T}\mbox{ diag}\big(1,\; -1,\ldots ,(-1)^{\Omega(n)}\big)D_n.
\]
\end{example}

\begin{remark}
Due to the fact that the first line of the matrix $\![f(n,i,j,[i,j])]_{n\times n}$ contains only 0-s, the determinant of the matrix will always be 0.
\end{remark}

\subsection*{Acknowledgement}
This research was supported by the grant of Sapientia Foundation, Institute of Scientific Research.


\begin{thebibliography}{99} 

\bibitem{bege1}
A. Bege, Generalized GCD matrices
{\it Acta Univ. Sapientiae, Math.},
{\bf 2} 2010, 160--167.



\bibitem{beslin1}
S. Beslin, Reciprocal GCD matrices and LCM matrices,
{\it   Fibonacci Quart.},
{\bf 29} 1991, 271--274.

\bibitem{bourque1}
K. Bourque, S. Ligh, Matrices associated with classes of arithmetical functions,
{\it J. Number Theory},
{\bf 45} 1993, 367--376.



\bibitem{bourque2}
K. Bourque, S. Ligh, Matrices associated with multiplicative functions,
{\it  Linear Algebra Appl.},
{\bf 216} 1995, 267--275.


\bibitem{carlitz1}
L. Carlitz, Some matrices related to the greatest integer function,
{\it   J. Elisha Mitchell Sci. Soc.},
{\bf 76} 1960, 5--7.


\bibitem{feng1}
W. Feng, S. Hong, J. Zhao, Divisibility properties of power LCM matrices by power GCD matrices on gcd-closed sets,
{\it  Discrete Math.}, 
{\bf 309} 2009, 2627--2639.


\bibitem{hauk1}
P. Haukkanen, J. Wang \and J. Sillanp\"a\"a,
On Smith's determinant
{\it  Linear Algebra Appl.},
{\bf 258} 1997, 251--269.


\bibitem{hong1}
S. Hong, Factorization of matrices associated with classes of arithmetical functions,
{\it Colloq. Math.},
{\bf 98} 2003, 113--123.


\bibitem{jacobsthal1}
E. Jacobsthal, \"Uber die gr\"osste ganze Zahl. II. (German),
{\it Norske Vid. Selsk. Forh., Trondheim}, 
{\bf 30} 1957, 6--13. 


\bibitem{korkee1}
I. Korkee, P. Haukkanen, On meet and join matrices associated with incidence functions,
{\it  Linear Algebra Appl.},
{\bf  372} 2003, 127--153.



\bibitem{ovall1}
J. S. Ovall, An analysis of GCD and LCM matrices via the $\emph{LDL}\sp \emph{T}$-factorization,
{\it Electron. J. Linear Algebra},
{\bf  11} 2004, 51--58.


\bibitem{smith1}
H. J. S. Smith,   On the value of a certain arithmetical determinant,
{\it Proc. London Math. Soc.},
{\bf 7} 1875/76, 208--212.



\end{thebibliography}
\end{document}